\newcommand{\h}{\mathfrak{h}}
\newcommand{\C}{\mathbb{C}}
\newcommand{\He}{\mathcal{H}}
\newcommand{\KZ}{\operatorname{KZ}}
\newcommand{\OCat}{\mathcal{O}}
\newtheorem{Thm}{Theorem}[section]
\newtheorem{Lem}[Thm]{Lemma}
\theoremstyle{definition}
\numberwithin{equation}{section}
\address{Department
of Mathematics, Northeastern University, Boston MA 02115 USA}
\email{i.loseu@neu.edu}
\title{Finite dimensional quotients of Hecke algebras}
\author{Ivan Losev}
\thanks{MSC 2010: 20C08, 20F55, 16G99}
\begin{document}
\begin{abstract}
Let $W$ be a complex reflection group. We prove that there is the maximal finite dimensional quotient of
the Hecke algebra $\mathcal{H}_q(W)$ of $W$ and that the dimension of this quotient coincides with
$|W|$. This is a weak version of a Brou\'{e}-Malle-Rouquier conjecture from 1998. The proof is based
on categories $\mathcal{O}$ for Rational Cherednik algebras.
\end{abstract}
\maketitle
\section{Introduction}
Let $W$ be a complex reflection group. Recall that such groups are fully classified, see \cite{ST}.
In this context, one can also define the braid group $B_W$. Namely, let $\h$ denote the reflection
representation of $W$. Inside $\h$, one considers the open subset of regular points $\h^{reg}:=\{x\in \h| W_x=\{1\}\}$
so that $W$ is the Galois group of the cover $\h^{reg}\twoheadrightarrow \h^{reg}/W$. By definition,
the braid group $B_W$ is the fundamental group $\pi_1(\h^{reg}/W)$.

If $W$ is a Coxeter group, one considers a flat deformation of $\C W$ called a {\it Hecke algebra}.
These algebras are of importance in Representation theory (e.g., that of finite groups of Lie type)
and beyond (e.g., in Knot theory). One can define Hecke algebras for complex reflection groups as well.
In the general case this was done in \cite[4C]{BMR}. To recall the definition we need some more
notation. Namely, let $\mathfrak{H}$ denote the set of the reflection hyperplanes for $W$.
For $\Gamma\in \mathfrak{H}$, let $W_\Gamma$ denote the pointwise stabilizer of $\Gamma$, this is a cyclic group.
Set $\ell_\Gamma:=|W_\Gamma|$. The group $B_W$ is generated by elements $T_\Gamma, \Gamma\in \mathfrak{H},$ where,
roughly speaking, $T_\Gamma$ the rotation around $\Gamma$ by $2\pi/\ell_\Gamma$, see
\cite[Section 2]{BMR}.
Now pick independent variables $u_{\Gamma,i}, i=0,1,\ldots,\ell_{\Gamma}-1$ with $u_{\Gamma,i}=u_{\Gamma',i}$
for $W$-conjugate $\Gamma,\Gamma'$. Set ${\bf u}:=(u_{\Gamma,i})$. By definition, \cite[Definition 4.21]{BMR}, the Hecke
algebra $\mathcal{H}_{\bf u}(W)$ is the quotient of $\mathbb{Z}[{\bf u}^{\pm 1}]B_W$ modulo
the relations
$$\prod_{i=0}^{\ell_\Gamma-1}(T_\Gamma-u_{\Gamma,i})=0.$$
Brou\'{e}, Malle and Rouquier conjectured in \cite[Section 4C]{BMR} that $\He_{\bf u}(W)$ is a
free $\mathbb{Z}[{\bf u}^{\pm 1}]$-module generated by $|W|$ elements. Currently, the proof is missing
for several exceptional complex reflection groups. In this paper, we are going
to prove a weaker version of this conjecture.

First of all, we are dealing with specializations to $\C$. For a collection of nonzero
complex numbers $(q_{\Gamma,i})$, where $\Gamma\in \mathfrak{H}/W, i\in \{0,1,\ldots,\ell_\Gamma-1\}$,
consider the algebra $\mathcal{H}_q(W)$, the specialization of $\mathcal{H}_{\bf u}(W)$
with $u_{\Gamma,i}\mapsto q_{\Gamma,i}$. Note that replacing the collection $q_{\Gamma,i}$
with $(\alpha_\Gamma q_{\Gamma,i})$ for $\alpha_\Gamma\in \C^\times$, we get isomorphic algebras, see, e.g., \cite[3.3.3]{rouqqsch}.
So the number of parameters actually equals $|S/W|$, where $S$ stands for the set of complex
reflections in $W$. Note that if $q_{\Gamma,j}=\exp(2\pi\sqrt{-1}j/\ell_\Gamma)$ for all $\Gamma$
and $j$, we just have $\He_q(W)=\C W$. In general, however, it is even unclear whether the algebra
$\He_q(W)$ is finite dimensional or not.  In a way, the infinite dimension is the only obstruction
to $\dim \He_q(W)=|W|$. More precisely, we have the following theorem that is the main
result of this paper.

\begin{Thm}\label{Thm:main}
There is a  minimal two-sided ideal $I\subset \mathcal{H}_q(W)$ such that $\mathcal{H}_q(W)/I$
is finite dimensional. Moreover, we have $\dim \mathcal{H}_q(W)/I=|W|$.
\end{Thm}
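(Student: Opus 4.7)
The plan is to attack the theorem via the category $\OCat=\OCat_c(W,\h)$ of the rational Cherednik algebra $H_c(W)$, where the parameter $c$ is chosen so that the Knizhnik--Zamolodchikov functor of Ginzburg--Guay--Opdam--Rouquier lands in the category of $\He_q(W)$-modules. Recall that $\OCat_c$ is a highest weight category with standards $\Delta(\lambda)$ indexed by $\lambda\in\mathrm{Irr}(W)$, that $\KZ$ is an exact functor valued in finite-dimensional modules, and that $\KZ$ is represented by a projective object $P_{\KZ}\in\OCat_c$. Consequently $\He_q(W)$ acts on $\KZ(P_{\KZ})\cong\mathrm{End}_{\OCat_c}(P_{\KZ})^{\mathrm{op}}$ via a canonical algebra homomorphism $\phi\colon\He_q(W)\to\mathrm{End}_{\OCat_c}(P_{\KZ})^{\mathrm{op}}$.

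I would set $I:=\ker\phi$, so that $\He_q(W)/I$ embeds into the finite-dimensional target. For minimality of $I$, it suffices to show that every finite-dimensional $\He_q(W)$-module $M$ lies in the essential image of $\KZ$: once this is known, the $\He_q(W)$-action on $\KZ(N)=\mathrm{Hom}_{\OCat_c}(P_{\KZ},N)$ factors through $\mathrm{End}_{\OCat_c}(P_{\KZ})^{\mathrm{op}}$ by construction, so $I\subseteq\mathrm{Ann}(M)$; applying this to the left regular module $\He_q(W)/J$ attached to any cofinite two-sided ideal $J$ yields $I\subseteq J$.

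For the dimension count I would use surjectivity of $\phi$, which gives $\He_q(W)/I\cong\mathrm{End}_{\OCat_c}(P_{\KZ})^{\mathrm{op}}$, combined with the standard identification $P_{\KZ}\cong\bigoplus_{\lambda}P(\lambda)^{\oplus\dim\KZ(L(\lambda))}$ and BGG reciprocity in $\OCat_c$. Exactness of $\KZ$ then produces
$$\dim\mathrm{End}_{\OCat_c}(P_{\KZ})=\sum_{\mu\in\mathrm{Irr}(W)}\bigl(\dim\KZ(\Delta(\mu))\bigr)^2=\sum_{\mu\in\mathrm{Irr}(W)}(\dim\mu)^2=|W|,$$
using that $\KZ(\Delta(\mu))$ is the rank-$\dim\mu$ KZ local system attached to $\mu$, and the orthogonality relation $\sum_\mu(\dim\mu)^2=|W|$.

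The main obstacle is to establish, without assuming the BMR freeness conjecture, both the essential surjectivity of $\KZ$ onto finite-dimensional $\He_q(W)$-modules and the surjectivity of $\phi$ onto $\mathrm{End}_{\OCat_c}(P_{\KZ})^{\mathrm{op}}$. Both claims would be immediate from BMR, but here one needs a different route. I anticipate a deformation argument in which $c$ is allowed to vary across the parameter space for $H_c(W)$: one studies how $\mathrm{End}_{\OCat_c}(P_{\KZ})$ varies in a flat family of highest weight categories, observes that at generic $c$, where $\OCat_c$ is semisimple and $\He_q(W)=\C W$, both statements are trivial, and transports the conclusions to the given specialization by a semicontinuity/specialization argument.
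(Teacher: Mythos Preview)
Your reduction of the theorem to essential surjectivity of $\KZ\colon\OCat_c(W)\to\He_q(W)\operatorname{-mod}_{fin}$ is correct and matches the paper's strategy; the computation $\dim\operatorname{End}_{\OCat_c}(P_{\KZ})=|W|$ via BGG reciprocity is fine, and surjectivity of $\phi$ is already contained in the proof of \cite[Theorem~5.15]{GGOR}, so no separate argument is needed there.

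The gap is in your approach to essential surjectivity. A deformation argument in $c$ cannot work as sketched. First, the claim ``at generic $c$, $\He_q(W)=\C W$'' is either circular or false: at $c=0$ one has $q_{\Gamma,j}=\exp(2\pi\sqrt{-1}j/\ell_\Gamma)$ and indeed $\He_q(W)=\C W$, but for Zariski-generic $c$ the parameter $q$ is generic and the assertion $\dim\He_q(W)=|W|$ is precisely the BMR statement you are trying to avoid. Second, and more fundamentally, you are handed a single finite-dimensional $\He_q(W)$-module $V$ at a \emph{fixed} $q$; there is no reason $V$ extends to a flat family over a neighborhood of $c$ in parameter space, and the category of finite-dimensional $\He_q(W)$-modules itself does not vary flatly (it jumps exactly at special $q$). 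There is thus no object on which to run semicontinuity, and essential surjectivity is in any case not the kind of property that specializes from a generic point.

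The paper proceeds instead by a direct geometric construction at the fixed parameter. One regards $V$ as a $W$-equivariant regular-singular local system $N$ on $\h^{reg}$ and extends it across the reflection hyperplanes one at a time: near each $\Gamma$ the Bezrukavnikov--Etingof isomorphism of completions reduces the problem to the cyclic group $W_\Gamma$, for which $\KZ$ is essentially surjective by inspection, producing a local extension $\tilde N_\Gamma$. Intersecting these inside $\iota_*N$ yields a coherent, Euler-stable $H_c(W)$-sheaf $\tilde N$ on the subregular locus $\h^{sr}/W$; a local-cohomology argument shows $\tilde N$ is a vector bundle, so $M:=\Gamma(\h^{sr}/W,\tilde N)$ is finitely generated over $\C[\h]^W$, and the locally finite Euler derivation forces $M\in\OCat_c(W)$ with $\KZ(M)\cong V$. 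No variation of $c$ enters.
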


Other results towards the Brou\'{e}-Malle-Rouquier conjecture were known before, see \cite{Marin}
for a review. One advantage of our approach is that it is fully conceptual and does not involve
any case-by-case arguments.

The key idea of the proof is to use categories $\mathcal{O}$ for Rational Cherednik algebras
$H_c(W)$ introduced in \cite{GGOR}. By definition,  the algebra $H_c(W)$ is the subalgebra
in the skew-group algebra $D(\h^{reg})\#W$ generated by $\C[\h],\C W$ and so called Dunkl
operators $D_a, a\in \h$. Those are differential operators with first order poles along
the reflection hyperplanes $\Gamma$. We have a triangular decomposition $H_c(W)=\C[\h]\otimes \C W\otimes
S(\h)$ that allows to define the category $\mathcal{O}$. This is the category of all $H_c(W)$-modules
that are finitely generated over $\C[\h]$ and have a locally nilpotent action of $\h(\subset S(\h))$.

Let us pick $M\in \mathcal{O}$. Its restriction to $\h^{reg}$ is a $W$-equivariant local system
on $\h^{reg}$. So the fiber $M_x$ carries a monodromy representation of $B_W$. It was shown
in \cite{GGOR} that the $\C B_W$-action on $M_x$ factors though a certain quotient of $\He_q(W)$
that has dimension $|W|$. We will show that every finite dimensional $\He_q(W)$-module can be
represented in the form $M_x$ for some $M\in \mathcal{O}$.

The assumption $\dim\mathcal{H}_q(W)=|W|$ is actually important for the representation theory of
$H_c(W)$. Theorem \ref{Thm:main}
should make it possible to remove this assumption but we are not going to elaborate on that.

The paper is organized as follows. In Section \ref{SS_gen} we gather various generalities on the
Rational Cherednik algebras and their categories $\mathcal{O}$. Then in Section \ref{SS_proof}
we prove the main theorem.

{\bf Acknowledgements}. This project was sparked by a discussion with Raphael Rouquier in April 2014.
I would like to thank him and  Roman Bezrukavnikov, Pavel Etingof and Dennis Gaitsgory for stimulating discussions.
Besides, I am grateful to Pavel  Etingof for his comments on a preliminary version of this paper
and for pointing out the reference \cite{Wilcox}.
My work was supported by the NSF  under Grant  DMS-1161584.

\section{Generalities}\label{SS_gen}
\subsection{Rational Cherednik algebras}\label{SS_RCA}
Rational Cherednik algebras were introduced by Etingof and Ginzburg in \cite{EG}. In this subsection
we recall the definition.

Let $W$ be a complex reflection group and $\h$ be its reflection representation. By $S$ we denote the subset of
$W$ consisting of the complex reflections. For $s\in S$, pick an eigen-vector $\alpha_s\in \h^*$
for $s$ with eigenvalue $\lambda_s\neq 1$. We fix a $W$-invariant function $c:S\rightarrow \C$.
Using this function, for $a\in \h$, we can define the Dunkl operator $D_a\in D(\h^{reg})\#W$ by the following
formula:
$$D_a=\partial_a+\sum_{s\in S} \frac{2c(s)}{1-\lambda_s}\frac{\langle \alpha_s,a\rangle}{\alpha_s}(s-1).$$
The Rational Cherednik algebra $H_c(W)$ is the subalgebra in $D(\h^{reg})\#W$ generated by $\C[\h],\C W$
and the Dunkl operators $D_a, a\in \h$. Alternatively, one can present $H_c(W)$ by generators and relations:
$H_c(W)$ is the quotient of $T(\h\oplus \h^*)\#W$ by the relations
$$[x,x']=[y,y']=0, [y,x]=\langle y,x\rangle-\sum_{s\in S} c(s)\langle \alpha_s,y\rangle\langle \alpha_s^\vee,x\rangle s, \quad x,x'\in \h^*, y,y'\in \h.$$
Here we write $\alpha_s^\vee$ for the eigenvector of $s$ in $\h$ with eigenvalue $\lambda_s^{-1}$ and $\langle \alpha_s^\vee,\alpha_s\rangle=2$. To get from the second definition to the first one, we use a homomorphism
$H_c(W)\rightarrow D(\h^{reg})\#W$ given by $x\mapsto x, w\mapsto w, y\mapsto D_y$. Set $\delta:=\prod_{s\in S}\alpha_s^{\ell_s}$, where $\ell_s$ stands for the order of $s$ (note that this is slightly different from the usual
definition). This is a $W$-invariant element and
the operator $\operatorname{ad}\delta:H_c(W)\rightarrow H_c(W)$ is locally nilpotent so the localization
$H_c(W)[\delta^{-1}]$ is defined. The homomorphism
$H_c(W)\rightarrow D(\h^{reg})\#W$ extends to an isomorphism $H_c(W)[\delta^{-1}]\xrightarrow{\sim} D(\h^{reg})\#W$.

The algebra $H_c(W)$ admits a triangular decomposition: a natural map $S(\h^*)\otimes \C W\otimes S(\h)\rightarrow
H_c(W)$ is an isomorphism. Also $H_c(W)$ is graded with $\deg x=1, \deg w=0, \deg y=-1, x\in \h^*, w\in W, y\in \h$.
We call this grading the Euler grading. It is inner: it is given by the eigenvalues of $\operatorname{ad}h$,
where
\begin{equation}\label{eq:Euler}
h=\sum_{i=1}^n x_i y_i+\frac{n}{2}-\sum_{s\in S} \frac{2c(s)}{1-\lambda_s}s.\end{equation}

Now let us discuss the base change for $H_c(W)$. Let $U$ be an affine algebraic variety
equipped with an \'{e}tale map $U\rightarrow \h/W$. Then $\C[U]\otimes_{\C[\h]^W}H_c(W)$
has a natural algebra structure, it is a subalgebra in $D(U\times_{\h/W}\h^{reg})\# W$
generated by $\C[U]\otimes_{\C[\h/W]}\C[\h],\C W$ and the Dunkl operators. Similarly,
if $U$ is a Stein complex analytic manifold (again equipped with an \'{e}tale map $U\rightarrow \h/W$), then $\C_{an}[U]\otimes_{\C[\h/W]}H_c(W)$
is an algebra. Here and below $\C_{an}[U]$ denotes the algebra of analytic functions on $U$.

\subsection{Categories $\mathcal{O}$ and $\KZ$ functor}
The category $\mathcal{O}$ for $H_c(W)$ was defined in \cite{GGOR}. By definition, it consists of all
$H_c(W)$-modules $M$ that are finitely generated over $S(\h^*)=\C[\h]$ and where the action of $\h$
is locally nilpotent. Equivalently, $\mathcal{O}$ consists of all $H_c(W)$-modules $M$ that are finitely
generated over $\C[\h]$ and can be graded. This category $\mathcal{O}$ will be denoted by $\OCat_c(W)$.

Let us proceed to the KZ functor introduced in \cite[Section 5]{GGOR}. Pick $M\in \OCat_c(W)$.
Then $M[\delta^{-1}]$ is a $W$-equivariant local system on $D(\h^{reg})$ with regular singularities.
The category of such local systems is equivalent to the category $B_W\operatorname{-mod}_{fin}$
of the finite dimensional $B_W$-modules, to a local system $M'$ one assigns its fiber (or, more precisely,
the fiber of its descent to $\h^{reg}/W$) equipped
with the monodromy representation. It turns out that the monodromy representation associated to
$M[\delta^{-1}]$ factors through $\mathcal{H}_q(W)$, \cite[5.3]{GGOR}, where the parameter $q$ is computed as
follows.  For a reflection hyperplane $\Gamma$, set
\begin{align}\label{eq:h_to_c} &h_{\Gamma,i}=\frac{1}{\ell_\Gamma}\sum_{s\in W_\Gamma\setminus \{1\}}\frac{2c(s)}{\lambda_s-1}\lambda_s^{-i},\\\label{eq:q_to_h}
&q_{\Gamma,i}=\exp(2\pi\sqrt{-1}(h_{\Gamma,j}+j/\ell_H)).
\end{align}
So we get an exact functor $\KZ:\OCat_c(W)\rightarrow \He_q(W)\operatorname{-mod}_{fin}$.
This functor is given by $\operatorname{Hom}_{\OCat_c(W)}(P_{\KZ},\bullet)$, where
$P_{KZ}$ is a projective object such that $\dim \operatorname{End}_{\OCat_c(W)}(P_{\KZ})=|W|$
equipped with a homomorphism $\He_q(W)\rightarrow \operatorname{End}_{\OCat_c(W)}(P_{\KZ})$.
The proof of \cite[Theorem 5.15]{GGOR} shows that this homomorphism is surjective.

Theorem \ref{Thm:main} will follow if we show that the functor $\KZ$
is essentially surjective.

\subsection{Isomorphisms of \'{e}tale lifts}\label{SS_lift_iso}
Here we are going to recall some results of \cite{BE} regarding isomorphisms of completions.

Let $W'\subset W$ be a parabolic subgroup, i.e., the stabilizer of a point in $\h$.
Set $\h^{reg-W'}:=\{b\in \h| W_b\subset W'\}$. The complement of $\h^{reg-W'}$ in $\h$ is
the union of the hyperplanes $\ker\alpha_s$ for $s\not\in W'$. So $\h^{reg-W'}$
is a principal open subset of $\h$. Note that the natural morphism $\h^{reg-W'}/W'\rightarrow \h/W$
is \'{e}tale (and $\h^{reg-W'}/W'$ is precisely the unramified locus of $\h/W'\rightarrow \h/W$).

Consider the space $H_c(W)_{reg-W'}:=\C[\h^{reg-W'}]^{W'}\otimes_{\C[\h]^W}H_c(W)$. As was mentioned in
 the end of Subsection \ref{SS_RCA},  $H_c(W)_{reg-W'}$ is actually an algebra.
In \cite[3.3]{BE}, Bezrukavnikov and Etingof essentially found an alternative description
of this algebra. Namely, consider the Cherednik algebra $H_c(W',\h)$ defined for the pair
$W',\h$, it decomposes into the tensor product $H_c(W',\h)=D(\h^{W'})\otimes H_c(W')$
(here we abuse the notation and write $c$ for the restriction of $c$ to $S\cap W'$; $D(\h^{W'})$
stands for the algebra of differential operators on $\h^{W'}$)
and form its localization $H_c(W,\h)_{reg-W'}:=\C[\h^{reg-W'}]^{W'}\otimes_{\C[\h]^{W'}}H_c(W',\h)$.
Then, following \cite[3.2]{BE}, we can form the centralizer algebra $Z(W,W', H_c(W',\h)_{reg-W'})$.
Recall that, by definition, for an algebra $A$ equipped with a homomorphism $\C W'\rightarrow A$
one defines the centralizer algebra $Z(W,W',A)$ as the endomorphism algebra of the right $A$-module
$\operatorname{Map}_{W'}(W,A)=\{f:W\rightarrow A| f(w'w)=w'f(w)\}$. Choosing representatives of the
left $W'$-cosets in $W$, we get an identification $Z(W,W',A)\cong \operatorname{Mat}_{|W/W'|}(A)$.
The algebra $A$ can be recovered back from $Z(W,W',A)$ as follows. Consider the element $e(W')\in Z(W,W',A)$
given by $e(W')f(u)=f(u)$ if $u\in W'$ and $0$ else. Then $e(W')Z(W,W',A)e(W')$ is naturally identified with
$A$.

The following is essentially \cite[Theorem 3.2]{BE} (they considered completions instead of
\'{e}tale lifts but the proof works in our situation as well).

\begin{Lem}\label{Lem:isom}
There is a unique isomorphism
\begin{equation}\label{eq:theta_iso}\theta: H_c(W)_{reg-W'}\xrightarrow{\sim} Z(W,W', H_c(W',\h)_{reg-W'})\end{equation} such that
the following hold for any $f\in \operatorname{Map}_{W'}(W,A)$ and any $u\in W$
\begin{align*}
& [\theta(g)f](u)=gf(u),\quad g\in \C[\h^{reg-W'}]^{W'},\\
& [\theta(\alpha)f](u)= (u\alpha) f(u), \quad \alpha\in \h^*,\\
& [\theta(w)f](u)=f(uw), \quad w\in W,\\
& [\theta(a)f](u)=(ua)f(u)+\sum_{s\in S\setminus W'}\frac{2c(s)}{1-\lambda_s}\frac{\langle \alpha_s,ua\rangle}{\alpha_s}(f(su)-f(u)), \quad a\in \h.
\end{align*}
\end{Lem}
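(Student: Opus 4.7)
The plan is to adapt the strategy of \cite[Theorem 3.2]{BE} from the completion setting to the \'etale-lift setting. The first observation is that $H_c(W)_{reg-W'}$ is generated as a $\C$-algebra by $\C[\h^{reg-W'}]^{W'}$ together with $\h^*$, $W$, and $\h$ (the last three via the Dunkl embedding into $D(\h^{reg})\#W$). Hence the four formulas in the lemma determine $\theta$ uniquely on a generating set, so uniqueness is automatic once existence is established.

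For existence, I would verify directly that the prescribed operators on the right $H_c(W',\h)_{reg-W'}$-module $\operatorname{Map}_{W'}(W, H_c(W',\h)_{reg-W'})$ satisfy the defining relations of $H_c(W)$. The relations $\theta(x)\theta(x')=\theta(xx')$ for $x,x'\in\h^*$ and $\theta(w)\theta(w')=\theta(ww')$ for $w,w'\in W$ are immediate, as is compatibility of $\theta$ on $\C[\h^{reg-W'}]^{W'}$ with the $W$-action. The essential computation is the Cherednik commutation relation
\[
[\theta(a),\theta(\alpha)] \;=\; \langle a,\alpha\rangle \;-\; \sum_{s\in S} c(s)\,\langle \alpha_s,\alpha\rangle\,\langle \alpha_s^\vee,a\rangle\,\theta(s).
\]
Evaluating the left-hand side against a test $f$ at $u\in W$, the bracket between the ``$(ua)$-piece'' of $\theta(a)$ and multiplication by $u\alpha$ produces, via the Cherednik relation inside $H_c(W',\h)_{reg-W'}$, the constant $\langle ua,u\alpha\rangle=\langle a,\alpha\rangle$ together with a sum over $s\in S\cap W'$; after reindexing via $s\mapsto usu^{-1}$ this assembles into the $S\cap W'$ part of the right-hand side. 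The explicit sum over $s\in S\setminus W'$ in $\theta(a)$ contributes the complementary reflections and, because it shifts the argument of $f$, supplies the permutation operator $\theta(s)$ with the correct coefficient. The remaining relation $[\theta(a),\theta(a')]=0$ follows from a parallel but longer calculation combining the commutativity of Dunkl operators in $H_c(W',\h)_{reg-W'}$ with an antisymmetry cancellation in $a,a'$ in the extra sum.

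For bijectivity I would reduce to Bezrukavnikov--Etingof via completion. Both sides of $\theta$ are finitely generated modules over the Noetherian ring $R:=\C[\h^{reg-W'}]^{W'}$: the left by the PBW theorem for $H_c(W)$, the right by its description as $|W/W'|\times|W/W'|$ matrices over $H_c(W',\h)_{reg-W'}$. For any maximal ideal $\mathfrak m\subset R$ corresponding to a $W'$-orbit $W'b\subset\h^{reg-W'}$, one has $W_b\subset W'$, and the $\mathfrak m$-adic completion of $\theta$ is precisely the isomorphism of \cite[Theorem 3.2]{BE} associated to the parabolic $W_b$. Since an $R$-linear map between finitely generated $R$-modules is an isomorphism iff its completion at every maximal ideal is, $\theta$ itself is an isomorphism.

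The main obstacle is the Cherednik-relation check: one must carefully match the index sets $S\cap W'$ and $S\setminus W'$ and track $u$-conjugation of the reflection data $(\alpha_s,\alpha_s^\vee,\lambda_s)$, since the BE splitting absorbs $S\cap W'$ into the $H_c(W',\h)$ factor while the reflections in $S\setminus W'$ only appear through the explicit correction term in the formula for $\theta(a)$.
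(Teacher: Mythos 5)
The paper gives no proof of this lemma at all: it states that the result "is essentially \cite[Theorem 3.2]{BE}" and that the Bezrukavnikov--Etingof proof for completions works verbatim for \'etale lifts. Your uniqueness argument and your existence argument (direct verification of the Cherednik commutation relation on $\operatorname{Map}_{W'}(W,A)$, with the $S\cap W'$ reflections absorbed into $H_c(W',\h)_{reg-W'}$ and the $S\setminus W'$ reflections supplied by the explicit correction term) are exactly the BE argument transplanted to the \'etale setting, and are fine.

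The bijectivity step, however, has two genuine problems. First, neither side of $\theta$ is a finitely generated $R$-module for $R=\C[\h^{reg-W'}]^{W'}$: by the triangular decomposition $H_c(W)\cong \C[\h]\otimes\C W\otimes S(\h)$, the factor $S(\h)$ makes $H_c(W)_{reg-W'}$ a free $R$-module of infinite rank, and likewise for $\operatorname{Mat}_{|W/W'|}(H_c(W',\h)_{reg-W'})$. So the criterion "a map of finitely generated modules is an isomorphism iff all its completions are" does not apply as stated. It can be repaired: the relevant completions are base changes $-\otimes_R R^{\wedge_{\mathfrak{m}}}$, and $R_{\mathfrak{m}}\rightarrow R^{\wedge_{\mathfrak{m}}}$ is faithfully flat, so kernel and cokernel are still detected at all maximal ideals --- but that is the argument you need, not finite generation. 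Second, and more substantively, the completion of $\theta$ at a point $W'b$ with $W_b\subsetneq W'$ (in particular at every point of $\h^{reg}$, where $W_b=\{1\}$) is a map into $Z(W,W',H_c(W',\h)^{\wedge_b})$, whereas \cite[Theorem 3.2]{BE} at $b$ lands in $Z(W,W_b,H_c(W_b,\h)^{\wedge_b})$; these targets coincide only when $W_b=W'$. To identify your completed map with theirs you need the transitivity isomorphism $Z(W,W',Z(W',W_b,A))\cong Z(W,W_b,A)$ together with the compatibility of the BE isomorphisms along the chain $W_b\subset W'\subset W$ --- true, and implicit in \cite{BE}, but a step you must supply. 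A route that avoids both issues (and is closer to what the paper's one-line remark intends) is to filter both sides by order of Dunkl operators and observe that $\operatorname{gr}\theta$ is the $c=0$ map $\C[\h^{reg-W'}\times\h^*]\#W\rightarrow Z(W,W',\C[\h^{reg-W'}\times\h^*]\#W')$, which is the standard centralizer-algebra isomorphism precisely because every point of $\h^{reg-W'}$ has stabilizer contained in $W'$.
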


Note that the algebras in (\ref{eq:theta_iso})
come equipped with $\C^\times$-actions by algebra automorphisms. For example, the action of $H_c(W)_{reg-W'}$ comes
from the action on $H_c(W)$ induced from the Euler grading and the action on $\h^{reg-W'}/W'$ induced from
the scaling action ($t.x=t^{-1}x$)  on $\h$. The isomorphism $\theta$ is $\C^\times$-equivariant.

We can further restrict $\theta$ to some analytic submanifolds or formal subschemes of $\h^{reg-W'}/W'$.
Choose a little disk $Y\subset \h^{reg-W'}\cap \h^{W'}$ and also a little disk $D$ around $0$ in
$\h_{W'}/W'$, where $\h_{W'}$ denotes a unique $W'$-stable complement to $\h^{W'}$ in $\h$.
We set $\widehat{Y}:=Y\times D$, this is an open submanifold in $\h^{W'-reg}/W'$, in $(\h^{reg-W'}\cap \h^{W'})\times
\h_{W'}/W'$ or in $\h/W$ (under the natural morphism $\h^{reg-W'}/W'\rightarrow \h/W$).

The algebra $\C_{an}[\widehat{Y}]\otimes_{\C[\h/W']}H_c(W',\h)$ coincides with
$$\C_{an}[\widehat{Y}]\otimes_{\C[\h_{W'}]\otimes \C[Y]}(H_c(W')\otimes D(Y))$$
so we get an isomorphism
\begin{equation}\label{eq:isom_Y}
\begin{split}
&\theta_Y: \C_{an}[\widehat{Y}]\otimes_{\C[\h/W]}H_c(W)\xrightarrow{\sim} \\ &Z(W,W',\C_{an}[\widehat{Y}]\otimes_{\C[\h_{W'}]\otimes \C[Y]}(H_c(W')\otimes D(Y))).
\end{split}
\end{equation}
Note that this isomorphism is compatible with the Euler derivations.

We can restrict even further. Pick a point $b\in Y$ and consider the completion $\C[\h/W]^{\wedge_b}$
of $\C[\h/W]$ with respect to the maximal ideal defined by $b$. Then $\theta_Y$ induces
\begin{equation}\label{eq:isom_b}
\theta_b: \C[\h/W]^{\wedge_b}\otimes_{\C[\h/W]}H_c(W)\xrightarrow{\sim}
Z(W,W', \C[\h/W']^{\wedge_b}\otimes_{\C[\h/W']}H_c(W',\h)).
\end{equation}
This is an isomorphism originally constructed in \cite{BE}.

\section{Proof of the main theorem}\label{SS_proof}
\subsection{Scheme of the proof}
Let $V$ be a finite dimensional $\He_q(W)$-module and let $N$ denote the corresponding $W$-equivariant $D$-module
on $\h^{reg}$. Our goal is to show that there is $M\in \OCat_c(W)$ such that
$M[\delta^{-1}]\cong N$. This consists of two  steps.
\begin{itemize}
\item[(I)] Set $\h^{sr}:=\{b\in W| \dim\h^{W_b}\geqslant \dim \h-1\}$. This is an open subset
that coincides with $\bigcup_{\Gamma\in \mathfrak{H}} \h^{reg-W_\Gamma}$, the  codimension
of its complement is bigger than $1$. We will see that there is an $\mathcal{O}_{\h^{sr}/W}$-coherent
$\mathcal{O}_{\h^{sr}/W}\otimes_{\C[\h]^W}H_c(W)$-module $\tilde{N}$
whose restriction to $\h^{reg}/W$ is isomorphic to $N$ and that carries a locally finite
derivation compatible with the Euler derivation of $H_c(W)$.
\item[(II)]  We will see that $\tilde{N}$ is a vector bundle.
From here we will deduce that the global sections of $\tilde{N}$ are finitely generated and
hence lie in $\mathcal{O}_c(W)$. Then we take $M:=\Gamma(\tilde{N})$.
\end{itemize}

Let us elaborate on how we are going to achieve (I).
First, in Subsection \ref{SS_weak_equi} we will check that the Euler vector field acts on $N$
locally finitely. This will eventually prove that $\tilde{N}$ comes equipped with a locally
finite derivation that is compatible with the Euler one on $H_c(W)$.

Now let us explain how we produce $\tilde{N}$, this is done in Subsection \ref{SS_codim1}.
Take $W'=W_\Gamma$ and let $\widehat{Y}$ have the same
meaning as in Subsection \ref{SS_lift_iso}. 
Set $\widehat{Y}^\times:=\widehat{Y}\setminus Y$.  Consider $N_\Gamma:=e(W')(\C_{an}[\widehat{Y}^\times]\otimes_{\C[\h^{reg}/W]}N)$. This is a vector bundle
on $\widehat{Y}^\times$ with a meromorphic connection that has pole of order $1$ on $Y$
(since $N_\Gamma$ is obtained by restricting an algebraic vector bundle
$e(W')\eta_\Gamma^*N$, where $\eta_\Gamma$ is a natural
morphism $\h/W_{\Gamma}\rightarrow \h/W$,
it makes sense to speak about sections
of $N_\Gamma$ with poles on $Y$; here and below $\eta_\Gamma$ denotes the projection $\h/W'\rightarrow \h/W$).
Our first step will be to see that $N_\Gamma$
is obtained by restricting a $\C_{an}[\widehat{Y}]\otimes_{\C[\h/W]}H_c(W)$-module $M_\Gamma$. Then we will see that
$[e(W')\eta_{\Gamma}^*N]\cap M_\Gamma$ (the intersection of subspaces in $N_\Gamma$) is finitely generated
over $\C[\h^{reg-W'}/W']$. We will get $\tilde{N}$, roughly speaking, by taking the intersection
of $N$ and $\eta_{\Gamma}^* N\cap M_\Gamma$ over all possible $\Gamma$.\footnote{After this paper was written,
I have learned from Etingof
that most of the proof is already contained in some form in \cite{Wilcox}. Lemmas 5.7,5.8 of {\it loc.cit.}
are similar to what is done in Subsection \ref{SS_codim1}, while the main result of Subsection
\ref{SS_glob_sec} has a somewhat easier proof, \cite[Lemma 3.6]{Wilcox}.}

\subsection{Locally finite derivation}\label{SS_weak_equi}
Our goal here is to show that the Euler vector field acts on $N$ locally finitely. Recall that $N$
is a local system on $\h^{reg}/W$ with regular singularities. Our claim is a consequence of the following
general result.

\begin{Lem}\label{Lem:weak_equiv}
Let $X$ be the complement to a $\C^\times$-stable divisor in $\C^d$ and let $N$ be a local system with regular
singularities on $X$. Then the Euler vector field $\mathsf{eu}$ acts on $N$ locally finitely, meaning that
every $n\in N$ is included into a finite dimensional $\mathsf{eu}$-stable subspace.
\end{Lem}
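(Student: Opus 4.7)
The strategy I would use is to compactify $X$ and realize the algebraic sections $\Gamma(X,N)$ as an ascending union of finite dimensional $\mathsf{eu}$-stable subspaces coming from Deligne's canonical extension.

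First I would pass to a smooth projective $\C^\times$-equivariant compactification $\bar X\supset X$ whose boundary $\bar D:=\bar X\setminus X$ is a $\C^\times$-stable simple normal crossings divisor; for instance, one can take a toric resolution of $\mathbb{P}^d$ containing the closures of the given $\C^\times$-stable divisor and of the hyperplane at infinity. I would then let $\bar N$ be the logarithmic Deligne extension of $N$ to $\bar X$: a locally free $\mathcal{O}_{\bar X}$-module with an integrable connection having log poles along $\bar D$ and residues normalized to lie in a fixed fundamental domain of $\C/\mathbb{Z}$.

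The key point is that $\bar N$ acquires a weakly $\C^\times$-equivariant structure from the uniqueness of Deligne's normalized extension. Indeed, for each $t\in\C^\times$, the pullback of $\bar N$ under scaling by $t$ is another normalized log extension of $N$, hence canonically isomorphic to $\bar N$, and these isomorphisms depend holomorphically on $t$. Consequently every twist $\bar N(k\bar D)=\bar N\otimes\mathcal{O}_{\bar X}(k\bar D)$ inherits the weak equivariance; its space of global sections is finite dimensional by projectivity of $\bar X$ and coherence of the twist, and the weak $\C^\times$-action forces $\mathsf{eu}$ to act locally finitely on it. Since every algebraic section of $N$ on $X$ extends meromorphically across $\bar D$ with poles of bounded order, one obtains $\Gamma(X,N)=\bigcup_{k\geq 0}\Gamma(\bar X,\bar N(k\bar D))$, and the local finiteness of $\mathsf{eu}$ on $\Gamma(X,N)$ follows.

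The main technical obstacle I expect is the construction of the weakly equivariant structure on $\bar N$. The local system $N$ itself is not strictly $\C^\times$-equivariant because the monodromy around a $\C^\times$-orbit is in general nontrivial; what rescues the argument is that Deligne's residue normalization absorbs this monodromy into (possibly non-integer) eigenvalues of $\mathsf{eu}$, so local finiteness on global sections is preserved even though strict equivariance fails.
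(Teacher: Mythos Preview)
Your proposal is correct but takes a genuinely different route from the paper. The paper's argument is much shorter: one first reduces to the case where $N$ is irreducible (local finiteness of $\mathsf{eu}$ passes through extensions), then observes that the loop $t\mapsto \exp(2\pi\sqrt{-1}t)\,x_0$ in $X$ coming from the $\C^\times$-action defines a \emph{central} element $\eta\in\pi_1(X)$. By Schur's lemma $\eta$ acts on the irreducible monodromy representation by a scalar, and under the Riemann--Hilbert correspondence this says precisely that $N$ is twisted $\C^\times$-equivariant, which gives local finiteness of $\mathsf{eu}$ at once.

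Your approach via the Deligne canonical extension on a projective $\C^\times$-equivariant compactification is also valid and has the virtue of producing an explicit exhaustive filtration of $\Gamma(X,N)$ by finite-dimensional $\mathsf{eu}$-stable subspaces, with no reduction to irreducibles. Note, though, that the weak equivariance you single out as the main obstacle is not really the heart of the matter: once $\bar D$ is $\C^\times$-stable, $\mathsf{eu}$ extends to a regular vector field on $\bar X$ tangent to $\bar D$, i.e.\ a logarithmic vector field, and the log connection on $\bar N$ then directly gives $\nabla_{\mathsf{eu}}:\bar N(k\bar D)\to\bar N(k\bar D)$; finite-dimensionality of global sections on the projective $\bar X$ finishes the argument. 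The paper's proof is more elementary and conceptual (centrality of the Euler loop plus Schur), while yours is closer in spirit to standard $D$-module arguments and yields slightly more explicit information about the filtration.
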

This claim should be standard, but we provide the proof in the sake of completeness. For a different proof,
see \cite[Lemma 3.2]{Wilcox}.
\begin{proof}
If $N'\subset N$ is a $D(X)$-submodule and $\mathsf{eu}$ acts locally finitely on $N/N',N'$, then
the same is true for $N$. So it is enough to assume that $N$ (and hence $V$) is irreducible.
Consider the element $\eta\in \pi_1(X)$ given by the loop $\exp(2\pi \sqrt{-1}t)x_0, t\in [0,1]$, where
$x_0$ denotes the base point.
The element $\eta$ is central and hence has to act on $V$ by a scalar. Under the Riemann-Hilbert correspondence,
the latter translates to the claim that $N$ is twisted equivariant with respect to the $\C^\times$-action.
This implies our claim.
\end{proof}

\subsection{Extension to codimension $1$}\label{SS_codim1}
We start by producing $M_\Gamma$.

Set $\widehat{Y}^\times:=Y\times D^\times=\widehat{Y}\setminus Y$. Consider the category
$\operatorname{Loc}_{rs}(\widehat{Y},Y)$ of meromorphic local system on $\widehat{Y}^\times$
with regular singularities on $Y$ (so an object in $\operatorname{Loc}_{rs}(\widehat{Y},Y)$ comes
equipped with a lattice over the ring of meromorphic differential operators on $\widehat{Y}^{\times}$,
a morphism in the category is supposed to preserve this lattice).
The category $\operatorname{Loc}_{rs}(\widehat{Y},Y)$
is equivalent to the category $\C[T^{\pm 1}]\operatorname{-mod}$
of the finite dimensional $\C[T^{\pm 1}]$-modules  via taking the monodromy
representation because $\pi_1(\widehat{Y}^\times)=\pi_1(D^\times)=\mathbb{Z}$
(here we use the regular singularities condition).
Under the equivalence $\C[T^{\pm 1}]\operatorname{-mod}\cong \operatorname{Loc}_{rs}(\widehat{Y},Y)$,
the KZ functor becomes $$\OCat_c(W')\rightarrow \operatorname{Loc}_{rs}(\widehat{Y},Y), M\mapsto \C_{an}[\widehat{Y}^\times]\otimes_{\C[\h_{W'}/W']}M,$$
(with meromorphic lattice $\C_{an}[\widehat{Y}][\nu^{-1}]\otimes_{\C[\h_{W'}/W']}M$, where $\nu$ denotes a coordinate
on $\h_{W'}/W'$).
The right adjoint $\operatorname{KZ}^*$ sends $N'\in \operatorname{Loc}_{rs}(\widehat{Y},Y)$
to the subspace of $N'$ of all meromorphic elements that are annihilated by the vector fields on $Y$ and lie in
the generalized eigenspace for $\h_{W'}\subset H_c(W')$ with eigenvalue $0$.

Now we can produce a $\C[\widehat{Y}]\otimes_{\C[\h/W']}H_c(W',\h)$-module $M_\Gamma\in \OCat_c(W',\widehat{Y})$. Set $N_\Gamma= e(W')(\C_{an}[\widehat{Y}^\times]\otimes_{\C[\h^{reg}/W]}N)$. This is an object in $\operatorname{Loc}_{rs}(\widehat{Y},Y)$.
Note that, under the equivalence $\operatorname{Loc}_{rs}(\widehat{Y},Y)\cong \C[T^{\pm 1}]\operatorname{-mod}$, we have $N_\Gamma\in \He_q(W')\operatorname{-mod}$. Now set  
\begin{equation}\label{eq:M_G_descr} M_{\Gamma}:=\C_{an}[\widehat{Y}]\otimes_{\C[\h_{W'}/W']}\operatorname{KZ}^*(N_\Gamma).\end{equation}
Note that the description of $\operatorname{KZ}^*(N_\Gamma)\subset N_\Gamma$ implies that it is stable under the Euler
vector field on $N_\Gamma$. So $M_{\Gamma}\subset N_{\Gamma}$ is also stable under the Euler vector field.

Set $\tilde{N}_\Gamma:= M_\Gamma\cap e(W')\eta_\Gamma^* N$ (the intersection is taken inside
$N_\Gamma$). This is a  submodule in
the $\C[\h^{reg-W'}]^{W'}\otimes_{\C[\h/W']}H_c(W')$-module $e(W')\eta_{\Gamma}^* N$ stable under the Euler
vector field.

\begin{Lem}\label{Lem:fin_gen}
The module $\tilde{N}_\Gamma$ is finitely generated over $\C[\h^{reg-W'}]^{W'}$ and satisfies
$\tilde{N}_{\Gamma}[\nu^{-1}]=e(W')\eta_{\Gamma}^* N$, where $\nu$ is a coordinate on $\h_{W'}/W'\cong \C$.
\end{Lem}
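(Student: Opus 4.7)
I will treat the two claims separately. For the localization $\tilde{N}_\Gamma[\nu^{-1}] = e(W')\eta_\Gamma^*N$, I start from the observation that $M_\Gamma[\nu^{-1}] = N_\Gamma$, which is immediate from comparing the tensor-product descriptions $M_\Gamma = \C_{an}[\widehat{Y}]\otimes_{\C[\h_{W'}/W']} K$ and $N_\Gamma = \C_{an}[\widehat{Y}^\times]\otimes_{\C[\h_{W'}/W']} K$ (with $K = \KZ^*(N_\Gamma)$), since $\C_{an}[\widehat{Y}^\times] = \C_{an}[\widehat{Y}][\nu^{-1}]$. Given an algebraic $s \in V := e(W')\eta_\Gamma^*N$, its restriction to $\widehat{Y}^\times$ lies in $N_\Gamma = M_\Gamma[\nu^{-1}]$, so $\nu^k s \in M_\Gamma$ for some $k$; since $\nu^k s$ is still algebraic, $\nu^k s \in \tilde{N}_\Gamma$, giving $s \in \tilde{N}_\Gamma[\nu^{-1}]$.

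For finite generation of $\tilde{N}_\Gamma$ over $R := \C[\h^{reg-W'}]^{W'}$, my plan is to exhibit a uniform bound $\tilde{N}_\Gamma \subset \nu^{-N}\bar V$, where $\bar V \subset V$ is a finitely generated $R$-submodule with $\bar V[\nu^{-1}] = V$; the Noetherian property of $R$ then forces $\tilde{N}_\Gamma$ to be finitely generated. I take $\bar V$ to be the $R$-span of any finite $R_\nu$-generating set $v_1,\dots,v_n$ of $V$ (such a set exists since $V$ is locally free of finite rank over $R_\nu := R[\nu^{-1}]$). To pin down $N$, I pick generators $k_1,\dots,k_r$ of $K$ over $\C[\h_{W'}]$; by the explicit description of $\KZ^*$ each $k_j$ lies in the meromorphic lattice of $N_\Gamma$, so writing $k_j = \sum_i c_{ji}v_i$ produces $c_{ji}\in\C_{an}[\widehat{Y}][\nu^{-1}]$ of finite $\nu$-pole order, and $N$ is the maximum of these finitely many orders. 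This yields $M_\Gamma = \C_{an}[\widehat{Y}]\cdot\{k_j\} \subset \nu^{-N}\C_{an}[\widehat{Y}]\cdot\{v_i\}$ inside $N_\Gamma$, and intersecting with the algebraic $V$ will give the desired containment.

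The intersection step uses the supporting equality $R_\nu \cap \C_{an}[\widehat{Y}] = R$ inside $\C_{an}[\widehat{Y}^\times]$, applied coordinate-wise: writing $\nu^N s = \sum b_i v_i$ for $s \in \tilde{N}_\Gamma$, each $b_i\in R_\nu$ is analytic on $\widehat{Y}$, hence lies in $R$, whence $\nu^N s \in \bar V$. This equality holds because $Y$ is Zariski-dense in the irreducible prime divisor $\Gamma \cap \h^{reg-W'} \subset \h^{reg-W'}$ cut out by $\nu \in R$, so an algebraic function with analytic pole of order $k$ on $Y$ has algebraic pole of order $k$ along the entire divisor, forcing membership in $\nu^{-k}R$. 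The main obstacle I anticipate is the pole-order bound itself: identifying the meromorphic lattice structure of $N_\Gamma$ (implicit in the category $\operatorname{Loc}_{rs}(\widehat{Y},Y)$) with the $\C_{an}[\widehat{Y}][\nu^{-1}]$-extension of $V$, and then using the finite generation of $K$ over $\C[\h_{W'}]$ to extract a uniform finite bound on the pole orders of any chosen generating set.
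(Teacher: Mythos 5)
Your overall strategy matches the paper's: show that $M_\Gamma$ is commensurable with an algebraically defined lattice, then conclude by an algebraic--analytic comparison of coefficients. But there is a genuine gap at the starting point. The identity $N_\Gamma=\C_{an}[\widehat{Y}^\times]\otimes_{\C[\h_{W'}/W']}\KZ^*(N_\Gamma)$ is not a ``tensor-product description'' of $N_\Gamma$ --- the definition of $N_\Gamma$ is $e(W')(\C_{an}[\widehat{Y}^\times]\otimes_{\C[\h^{reg}/W]}N)$ --- it is the assertion that the counit $\KZ\KZ^*(N_\Gamma)\rightarrow N_\Gamma$ is an isomorphism, i.e., that $K=\KZ^*(N_\Gamma)$ is large enough that $M_\Gamma$ is a full $\C_{an}[\widehat{Y}]$-lattice in the meromorphic lattice $N_{\Gamma}^{mer}$. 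This is precisely what is needed for the inclusion $e(W')\eta_\Gamma^*N\subset\tilde{N}_\Gamma[\nu^{-1}]$ (without it $\tilde{N}_\Gamma$ could a priori even be $0$), and it is the one place where representation-theoretic input enters: one must know that $N_\Gamma$ lies in the essential image of the rank-one KZ functor, which the paper deduces from the fact that the surjection $\He_q(W')\rightarrow\operatorname{End}_{\OCat_c(W')}(P_{\KZ})^{opp}$ is an isomorphism for the cyclic group $W'$, together with a fiberwise check. You simultaneously call this step ``immediate'' and list it as ``the main obstacle,'' and you never discharge it. (A smaller inaccuracy: $\C_{an}[\widehat{Y}^\times]\neq\C_{an}[\widehat{Y}][\nu^{-1}]$, since analytic functions on $Y\times D^\times$ may have essential singularities along $Y$; this is harmless here because algebraic sections do land in the meromorphic lattice, but the equality as stated is false.)

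The remainder of your plan is sound and runs parallel to the paper's, with one fixable slip: coefficients with respect to an $R_\nu$-generating set of the projective (not necessarily free) module $V$ are not unique, so from $\nu^N s\in V\cap\C_{an}[\widehat{Y}]\cdot\{v_i\}$ you cannot conclude that your chosen algebraic coefficients $b_i$ are the analytic ones. The paper avoids this by realizing $N$ as a direct summand of a free module, so that $N_{\Gamma}^{mer}$ sits as a direct summand of $\C_{an}[\widehat{Y}][\nu^{-1}]^{\oplus r}$ and the intersection with $\C_{an}[\widehat{Y}]^{\oplus r}$ can be taken coordinatewise; you should pass to such a presentation rather than a bare generating set. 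Your supporting equality $R[\nu^{-1}]\cap\C_{an}[\widehat{Y}]=R$, justified by Zariski-density of $Y$ in the irreducible divisor cut out by $\nu$, is correct and is exactly the comparison the paper uses when it identifies $M'\cap e(W')\eta_\Gamma^*N$ with $e(W')\eta_{\Gamma}^*N\cap\C[\h^{reg-W'}/W']^{\oplus r}$.
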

\begin{proof}
Note that the epimorphism $\He_q(W')\rightarrow \operatorname{End}_{\OCat_c(W')}(P_{KZ})^{opp}$
is an isomorphism.
Let $N_{\Gamma}^{mer}\subset N_{\Gamma}$ denote the meromorphic lattice.
Then $M_\Gamma$ is contained in $N_{\Gamma}^{mer}$ and is a $\C_{an}[\widehat{Y}]$-lattice
there. 
Indeed, it is enough to show this fiberwise (i.e., at any point of $Y$), where this is clear (to prove
that $M_{\Gamma}$ is a lattice we use the observation that $N_\Gamma$ is the image of $M_{\Gamma}$
under the KZ functor). For any other lattice $M'$
we have $\nu^{d}M'\subset M_{\Gamma}\subset \nu^{-d} M'$ for some $d>0$. So it is enough to show that $N\cap M'$ is finitely generated for some lattice $M'$. Let us produce
such $M'$.

As a $\C[\h^{reg}/W]$-module, $N$ is projective and hence is a direct summand in a free module, say $\C[\h^{reg}/W]^{\oplus r}$.
So $N_{\Gamma}^{mer}$ is a direct summand in $\C_{an}[\widehat{Y}][\nu^{-1}]^{\oplus r}$. The intersection
$M':=N_{\Gamma}^{mer}\cap \C_{an}[\widehat{Y}]^{\oplus r}$ (inside $\C_{an}[\widehat{Y}][\nu^{-1}]^{\oplus r}$) is clearly a lattice in $N_{\Gamma}^{mer}$. Further,
the intersection $M'\cap e(W')\eta_\Gamma^*N$ coincides with $e(W')\eta_{\Gamma}^* N\cap \C[\h^{reg-W'}/W']^{\oplus r}$
and hence is finitely generated (and clearly is a lattice in $e(W')\eta_{\Gamma}^* N$). We are done.
\end{proof}

Now we are ready to define a module $\tilde{N}$ over $\mathcal{O}_{\h^{sr}/W}\otimes_{\C[\h/W]}H_c(W)$. Abusing the notation, let us write $\tilde{N}_\Gamma$ for the corresponding (under the equivalence) $\C[\h^{reg-W_{\Gamma}}/W_{\Gamma}]\otimes_{\C[\h/W]}H_c(W)$-module, the restriction of $\tilde{N}_\Gamma$
to $\eta_\Gamma^{-1}(\h^{reg}/W)$ coincides with $\eta_\Gamma^* N$ by the construction. Let  $\iota_\Gamma: \h^{reg-W_\Gamma}\hookrightarrow \h$ be the inclusion and $\pi_\Gamma:\h^{reg-W_\Gamma}\twoheadrightarrow \h^{reg-W_{\Gamma}}/W_{\Gamma}$
be the quotient morphism. Also let $\pi: \h\rightarrow \h/W$ denote the quotient morphism
and $\iota:\h^{reg}\hookrightarrow \h^{sr}$ be the inclusion.
Note that, by the construction, $\iota_{\Gamma*}\pi_{\Gamma}^* \tilde{N}_{\Gamma}\subset \iota_*\pi^* N$
(recall that we  view $N$ as a coherent sheaf on $\h^{reg}/W$). The intersection
$\widehat{N}:=\bigcap_{\Gamma} \iota_{\Gamma*}\pi_{\Gamma}^* \tilde{N}_{\Gamma}$ is a coherent sheaf on $\h^{sr}$
because of Lemma \ref{Lem:fin_gen} and the equality
$\h^{sr}=\bigcup_\Gamma\h^{reg-W_{\Gamma}}$. The intersection is stable under the Euler vector field
because all $\tilde{N}_{\Gamma}$ are.
Also $\widehat{N}$ is $W$-stable, this is because  $w\pi_{\Gamma}^* \tilde{N}_{\Gamma}=\pi_{w\Gamma}^* \tilde{N}_{w\Gamma}$. Now set $\tilde{N}:=\pi_*(\widehat{N})^W=\pi_*\widehat{N}\cap \iota'_* N$
(where $\iota':\h^{sr}/W\hookrightarrow \h/W$ denotes the inclusion). This is a coherent sheaf on
$\h^{sr}/W$
stable under the Euler vector field on $\iota'_* N$.  It remains to show that $\tilde{N}\subset \iota'_* N$
is stable under $H_c(W)$. But this follows from
\begin{equation}\label{eq:tild_alt}\tilde{N}=\iota'_* N\cap\bigcap_\Gamma \eta_{\Gamma*} \tilde{N}_{\Gamma},\end{equation}
where now we view $\eta_\Gamma$ as a morphism $\h^{reg-W'}/W'\rightarrow \h^{sr}/W$. (\ref{eq:tild_alt})
follows from the observation that $\tilde{N}_\Gamma=\pi_{\Gamma*}(\pi_{\Gamma}^* \tilde{N}_\Gamma)^{W_\Gamma}$
and $\pi=\eta_{\Gamma}\circ \pi_{\Gamma}$. Since all sheaves in the right hand side of (\ref{eq:tild_alt})
are stable under $H_c(W)$, we see that $\tilde{N}$ is stable as well. It follows from the construction
that $\tilde{N}|_{\h^{reg}/W}\cong N$.

\subsection{Global sections}\label{SS_glob_sec}
\begin{Lem}\label{Lem:vect_bundle}
The sheaf $\tilde{N}$ is a vector bundle on $\h^{sr}/W$.
\end{Lem}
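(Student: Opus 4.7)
The plan is to check local freeness of $\tilde{N}$ étale-locally on $\h^{sr}/W$, reducing to showing each $\tilde{N}_\Gamma$ is a vector bundle on $\h^{reg-W_\Gamma}/W_\Gamma$, and then to use the analytic description of $\tilde{N}_\Gamma$ near the divisor in terms of $M_\Gamma$.

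First, by étale descent along the covering $\h^{sr} = \bigcup_\Gamma \h^{reg-W_\Gamma}$ and the Morita equivalence from Lemma~\ref{Lem:isom} (via the idempotent $e(W_\Gamma)$), it suffices to show each $\tilde{N}_\Gamma$ is a vector bundle on $\h^{reg-W_\Gamma}/W_\Gamma$. These sheaves are coherent by Lemma~\ref{Lem:fin_gen} and are already locally free on the open complement of the divisor $Y_\Gamma := (\h^{W_\Gamma} \cap \h^{reg-W_\Gamma})/W_\Gamma$, where they agree with $\eta_\Gamma^*N$.

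Next, to verify local freeness at a point $b \in Y_\Gamma$, I would pass to the analytic neighborhood $\widehat{Y} = Y \times D$ of $b$ as in Subsection~\ref{SS_lift_iso} and identify the analytic localization of $\tilde{N}_\Gamma$ at $b$ with $M_\Gamma$. The construction in Subsection~\ref{SS_codim1} essentially produces $\tilde{N}_\Gamma$ as the unique algebraic lattice in $N_\Gamma^{mer}$ whose analytification near $Y$ coincides with $M_\Gamma$: the intersection $M_\Gamma \cap e(W_\Gamma)\eta_\Gamma^*N$ picks out the algebraic part of the Deligne canonical extension $M_\Gamma$, and the $\C^\times$-equivariance provided by the Euler derivation forces the algebraic and analytic structures to agree near $Y$. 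Thus local freeness of $\tilde{N}_\Gamma$ at $b$ reduces to freeness of $M_\Gamma$ as a $\C_{an}[\widehat{Y}]$-module.

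Finally, I would show $M_\Gamma$ is free. By (\ref{eq:M_G_descr}), $M_\Gamma = \C_{an}[\widehat{Y}] \otimes_{\C[\h_{W_\Gamma}/W_\Gamma]} \KZ^*(N_\Gamma)$, so by flatness of $\C_{an}[\widehat{Y}]$ over $\C[\h_{W_\Gamma}/W_\Gamma]$ it suffices to show $\KZ^*(N_\Gamma)$ is a free $\C[\h_{W_\Gamma}/W_\Gamma]$-module. The ring $\C[\h_{W_\Gamma}/W_\Gamma] \cong \C[\nu]$ is a PID since $W_\Gamma$ is cyclic on the one-dimensional space $\h_{W_\Gamma}$. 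The module $\KZ^*(N_\Gamma) \in \OCat_c(W_\Gamma)$ is finitely generated over $\C[\h_{W_\Gamma}]$, hence also over $\C[\nu]$; torsion-freeness follows from $\KZ^*(N_\Gamma) \hookrightarrow N_\Gamma^{mer}$, on which $\nu$ acts invertibly (the meromorphic lattice is defined over $\C_{an}[\widehat{Y}][\nu^{-1}]$). A finitely generated torsion-free module over a PID is free, so $\KZ^*(N_\Gamma)$, $M_\Gamma$, and hence $\tilde{N}_\Gamma$ near $b$ are all locally free, which combined with étale descent gives the lemma.

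The main difficulty is in the middle step: verifying that the analytic localization of the algebraic sheaf $\tilde{N}_\Gamma$ at a divisor point is actually $M_\Gamma$ rather than a strictly smaller sub-lattice. This rests on the $\C^\times$-equivariance from the Euler derivation (Subsection~\ref{SS_weak_equi}) and the uniqueness properties of the Deligne canonical extension in the regular-singular setting.
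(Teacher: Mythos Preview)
Your approach is genuinely different from the paper's, and the final step---showing $M_\Gamma$ is free because $\KZ^*(N_\Gamma)$ is finitely generated and torsion-free over the PID $\C[\nu]$---is correct and clean. The reduction to the individual $\tilde N_\Gamma$ via the \'etale cover $\{\h^{reg-W_\Gamma}/W_\Gamma\}$ and the Morita equivalence of Lemma~\ref{Lem:isom} is also reasonable (one checks using (\ref{eq:tild_alt}) and flatness of $\eta_\Gamma$ that $e(W_\Gamma)\eta_\Gamma^*\tilde N \cong \tilde N_\Gamma$). However, the middle step, which you yourself flag as the main difficulty, is a genuine gap that is not closed by the hints you give.

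Concretely, you need that the analytification of $\tilde N_\Gamma$ at a divisor point $b$ coincides with $M_\Gamma$. From $\tilde N_\Gamma\subset M_\Gamma$ you get a map $\C_{an}[\widehat Y]\otimes_{\C[\h^{reg-W'}/W']}\tilde N_\Gamma\to M_\Gamma$, but surjectivity is exactly the issue, and neither of your two hints resolves it. First, $M_\Gamma$ is \emph{not} the Deligne canonical extension: it is built from $\KZ^*$ and depends on the Cherednik parameter $c$, and its connection residues are not normalized to lie in any fundamental strip, so the uniqueness of Deligne's lattice is not available. Second, the Euler derivation gives a grading on the algebraic side, but $\C_{an}[\widehat Y]$ is not graded, so ``$\C^\times$-equivariance forces the algebraic and analytic structures to agree'' is an assertion, not an argument. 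Without this identification, freeness of $M_\Gamma$ says nothing about $\tilde N_\Gamma$: the intersection of a free analytic lattice with an algebraic one inside $N_\Gamma^{mer}$ can in principle fail to be locally free.

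For contrast, the paper's proof avoids the analytic--algebraic comparison altogether. It argues by contradiction via local cohomology: if $Z$ is the non-CM locus of $\tilde N$ and $d=\operatorname{codim}Z$, then some $H^i_{U\cap Z}(U,\tilde N)$ with $i<d$ is a nonzero $\mathcal{O}_U$-coherent $H_c(W)$-module supported in $Z$. Completing at a point $b$ and applying the formal isomorphism $\theta_b$ of (\ref{eq:isom_b}) forces this module to come from $\OCat_c(W_b)$, which in turn forces $d=1$ and makes the local cohomology a torsion submodule of $\tilde N$---impossible, since $\tilde N\hookrightarrow\iota'_*N$. Only the formal completion and the structure of category $\OCat$ are used; the analytic lattice $M_\Gamma$ never reappears.
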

\begin{proof}
The proof is inspired by \cite[3.2]{EGL}. We need to show that
$\tilde{N}$ is maximal Cohen-Macaulay when viewed as a coherent sheaf on $\h^{sr}/W$. Let $Z$ denote the non-CM
locus of $\tilde{N}$ in $\h^{sr}/W$ and let $d$ be the codimension of $Z$ in $\h^{sr}/W$.
Pick an open $\C^\times$-stable affine subvariety $U$ of $\h^{sr}/W$ that intersects
$Z$ (or, more precisely, an irreducible component of maximal dimension in $Z$).
Consider $H^i_{U\cap Z}(U, \tilde{N})$ for $i<d$. Similarly to \cite[3.2]{EGL}, all these groups
are  $\C[U]\otimes_{\C[\h/W]}H_c(W)$-modules, finitely generated
over $\C[U]$ (this follows from \cite[Expose VIII, Cor. 2.3]{Groth} using equivalence
of (ii) and (iii) there, note that the depth used in (ii) coincides with codimension thanks
to the choice of $Z$). Moreover, by the choice of $Z$, one of these modules is nonzero,
as in \cite[3.2]{EGL}.    The support of $R:=H^i_{U\cap Z}(U, \tilde{N})$ is contained in $Z\cap U$.

Pick $b\in \h$ lying over the support of $R$. Recall
the isomorphism $\theta_b:H_c(W)^{\wedge_b}\cong Z(W,W', H_c(W',\h)^{\wedge_b})$, where we take $W'=W_b$.
So we get  a nonzero $H_c(W',\h)^{\wedge_b}$-module $e(W')\theta_{b*}(R^{\wedge_b})$. This module
is finitely generated over $\C[\h]^{\wedge_b}$. So it is of the form $\C[\h_{W'}]^{\wedge_b}\otimes R_0^{\wedge_0}$
for $R_0\in \OCat_c(W')$. It follows that $d=1$ and that $R=\Gamma_{Z\cap U}(U,\tilde{N})$.
But, by the construction, $\Gamma(U,\tilde{N})$ is embedded into $\Gamma(U\cap \h^{reg}/W,N)$
and so $\Gamma(U,\tilde{N})$ has no torsion $\C[U]$-submodules. We get a contradiction
that shows that $\tilde{N}$ is Cohen-Macaulay. Since $\tilde{N}$ is torsion-free, we see that
it is maximal Cohen-Macaulay and hence is a vector bundle.
\end{proof}

%

Now we can use (iv) in \cite[Expose VIII, Cor. 2.3]{Groth} (applied to an extension of $\tilde{N}$ to a coherent
sheaf on $\h/W$) to see that $M:=\Gamma(\h^{sr}/W, \tilde{N})$ is finitely generated over $\C[\h]^W$.
Let us show that the $H_c(W)$-module $M$ lies in $\OCat_c(W)$. By the construction, $M$ carries
a locally finite derivation compatible with the derivation $\operatorname{ad}h$ of $H_c(W)$.
It follows that $M$ is gradable and hence lies in $\OCat$.
Also, by the construction, $M[\delta^{-1}]=N$. This completes the proof.

\end{document}